\newcommand{\red}{}	
\newcommand{\p}{\texttt{$p$}}
\newcommand{\blue}{}
\newcommand{\emi}{{\tt Emilia-923\,}}
\newcommand{\CC}{{\tt Cube}${5317k}\ $}
\newcommand{\fx}{\mathbf x}
\newcommand{\fw}{\mathbf w}
\newcommand{\fu}{\mathbf u}
\newcommand{\fz}{\mathbf z}
\newcommand{\fb}{\mathbf b}
\newcommand{\fv}{\mathbf v}
\newcommand{\fr}{\mathbf r}
	\title{Parallel Newton-Chebyshev Polynomial Preconditioners \\ for the COnjugate Gradient method}
	\author{L. Bergamaschi\footnotemark[2]
	 \and A. Mart\'{\i}nez\footnotemark[4]}
\begin{document}
\lhead{L. Bergamaschi and A. Mart\`{\i}nez}
\rhead{Polynomial preconditioners for the PCG method}

	\thispagestyle{plain}
	\date{\today}
	\maketitle

	\renewcommand{\thefootnote}{\fnsymbol{footnote}}

	\footnotetext[1] {Corresponding author.}
	\footnotetext[2] {Department of Civil Environmental and Architectural Engineering,
			  University of Padova, Italy, \href{mailto:luca.bergamaschi@unipd.it}{luca.bergamaschi@unipd.it}.}
	\footnotetext[4] {Department of Mathematics and Earth Sciences,
			  University of Trieste, Italy, \href{mailto:amartinez@units.it}{amartinez@units.it}.}

	\renewcommand{\thefootnote}{\fnsymbol{footnote}}

	\overfullrule=0pt

	\begin{abstract}
	In this note we exploit polynomial preconditioners for the Conjugate Gradient method 
	to solve large symmetric positive definite linear systems in a parallel environment.
	We put in connection a specialized Newton method to solve the matrix equation $X^{-1} = A$
		and the Chebyshev polynomials for preconditioning. We propose
		a simple modification of one parameter which avoids clustering of extremal
		eigenvalues in order to speed-up convergence. We provide
		results on very large matrices (up to 8 billion unknowns in a parallel
		environment) showing the efficiency of the proposed class of preconditioners.
	\end{abstract}
	\begin{keywords}
	polynomial preconditioner, Conjugate Gradient method, parallel computing, scalability   
	\end{keywords}

	\section{Introduction}

	Discretization of PDEs modeling different processes and constrained/unconstrain\-ed optimization problems
	often require the repeated solution of 
	large and sparse linear systems $A \fx = \fb$, {\blue in which $A$ is symmetric positive definite}.
	The size of these system can be of order $10^6 \div 10^9$ and this calls for the use of
	iterative methods, equipped with ad-hoc preconditioners as accelerators running on a  parallel computing environment.
	In most cases the huge size of the matrices involved prevents their complete storage. In these 
	instances only the application of the matrix to a vector is available as a routine (\textit{matrix
	-free regime}).
	Differently from direct factorization methods, 
	iterative methods do not need the explicit knowledge of the coefficient matrix. The issue is the
	construction of a preconditioner which also work in a matrix-free regime.
	The most common (full-purpose) preconditioner such as the incomplete LU factorization or most of the approximate 
	inverse preconditioners rely on the knowledge of the coefficients of the matrix. An exception
	is represented by the AINV preconditioner (\cite{MR1787297}), whose construction is however
	inherently sequential. In all cases 
	 factorization based methods are not easily parallelizable, the bottleneck being
	the solution of triangular systems needed when they are applied to a vector.

	Polynomial preconditioners, i.e. preconditioners that can be expressed as $P_k(A)$, are very
	attractive for the following main reasons:
	\begin{enumerate}
		\item Their construction is only theoretical, namely only the coefficients
			of the polynomial are to be computed with negligible computational cost.
		\item The application of $P_k(A)$ require a number, $k$, of matrix-vector products
			so that they can be implemented in a matrix-free regime.
		\item The eigenvectors of the preconditioned matrix are the same as those
			of $A$.
	\end{enumerate}

The use of polynomial preconditioner for accelerating Krylov subspace methods is not new.
We quote for instance the initial works in \cite{MR694525,doi:10.1137/0906059} to accelerate the Conjugate Gradient method 
and \cite{VANGIJZEN199591} where polynomial preconditioners are used
		to accelerate the GMRES \cite{SaadSchultz86} method.

However, these ideas have been recently resumed, mainly in the context
of nonsymmetric linear systems, e.g. in  \cite{loe2019new,loe2019polynomial}
	or in the  acceleration of the Arnoldi method for eigenproblems \cite{embree2018polynomial}.
	An interesting contribution to this subject is the work in \cite{kaporin} where Chebyshev-based polynomial
	preconditioners are applied in conjunction with sparse approximate inverses.

		The aim of this paper is twofold.
		We first give a theoretical evidence that a polynomial preconditioner for the CG method
 can be developed by starting from the well-known Newton's method to solve
 the matrix equation $P^{-1} - A = 0$. We will show that with a simple modification this method
 reveals equivalent, in exact arithmetics, to the Chebyshev polynomial preconditioner.
 The second objective of this paper is to show that polynomial preconditioners of very high degree
 can be useful to cut down the number of scalar products and improve 
 consistently the parallel scalability of the PCG method. Minimizing scalar products within
 Krylov subspace solvers is currently a matter of research (see e.g. the recent work in \cite{exascale}).

 The rest of the paper is organized as follows: In Section \ref{Newton} we  develop a recursion
 for preconditioners based on the Newton formula. In Section \ref{ChebSec} we review the theory 
 regarding Chebyshev polynomial preconditioners and show the
 equivalence between the Newton recurrence and a non standard recurrence for Chebyshev polynomials.
 A strategy to avoid clustering of the eigenvalues near the end of the spectrum which greatly enhances the performance of the proposed preconditioners is described
 in Section \ref{scale}.
 In Section \ref{numris} we report numerical results on both sequential and parallel computing environments obtained in the solution
 of very large linear systems (up to $8 \times 10^9$ unknowns for the
 largest problem) which we use as tests for our preconditioned CG.
 In Section \ref{conc} we draw some conclusions and propose topics for future research on the subject.

\newcommand{\argmin}{\text{arg}\!\min}
\section{Newton-based preconditioners}
\label{Newton}
The Newton preconditioner can be obtained as a trivial application of the Newton-Raphson method to the scalar equation
\[ x^{-1} - a = 0, \quad a \ne 0,\]
which reads 
\[ x_{j+1} = 2 x_j - a x_j^2, \quad j =0,\ldots, \qquad x_0 \ \text{fixed}.\]
The matrix counterpart of this method applied to $P^{-1} - A = 0$ can be cast as
\begin{equation}
	\label{newtonP}
 P_{j+1} = 2 P_j - P_j A P_j, \quad j = 0, \ldots, \qquad P_0 \text{ fixed}, 
\end{equation}
which is a well-known iterative method for matrix inversion (also known as Hotelling's method
\cite{hotelling1943}).

If $P_0$ is a given preconditioner for $A$ satisfying $P_0 A = A P_0$, then $\{P_j\}$ can be seen as a  sequence of preconditioners
converging to $A^{-1}$ if $\|I - P_0 A\|  = r < 1$. In fact, denoted by $E_j = I - P_j A$ we have
that {\red $\|E_j\|\le r^{2^{j}}$} as it can be easily proved by induction:
\[ \|E_{j+1}\| = \| I - 2 P_j A + (P_j A)^2\|  =\| E_{j}^2\| \le \|E_{j}\|^2  {\blue \le} {\red (r^{2^{j}})^2 =  r^{2^{j+1}}} \]
which implies $\displaystyle \lim_{j \to \infty} \|E_j\| = 0$.

Sequence $\{P_{j} \}$ can not be explicitly formed since it would produce
increasingly dense matrices. Actually, inside the PCG method only the product of $P_j$ times
a vector is needed and hence recursively we
have  \[\fw = P_{j+1} \fr \Longleftrightarrow \left \{\begin{array}{l}
\fu = P_j \fr \\ \fv = A \fu \\ \fw = 2\fu -P_j \fv\end{array} \right . \]
This method, as it is, is never used to form a preconditioner as it requires doubling
the computational work per iteration, while the condition number is reduced by a factor less than
4.  In fact, the condition
$\|I - P_0 A\| < 1$, with $P_0 A$ symmetric, is equivalent to the condition $0 < \lambda(P_0 A) < 2$.
Hence, assuming $1 \in \sigma(P_0 A$) 
the eigenvalues of $P_{1} A = 2 P_0 A - (P_0 A)^2 $ map a generic eigenvalue $\mu$
of $P_0 A$ in $2\mu - \mu^2$ with 
\[ \begin{cases} 
	\mu_{\min} &\mapsto 2\mu_{\min} - \mu_{\min}^2 \le 2 \mu_{\min} \\
	\mu_{\max} &\mapsto 2\mu_{\max} - \mu_{\max}^2 \le 1 \\
	 1 & \mapsto 1 \end{cases} \]
with $\kappa(P_1 A) \ge \dfrac{1} {2 \mu_{\min}} > \dfrac{\kappa(P_0 A)}{4}$.
In the next step, however, as the eigenvalues
of $P_1 A$ now lie in the interval $[\mu_1, 1]$, 
they are approximately mapped into $[2\mu_1, 1]$ with the condition number only halved.
Due to the asymptotic Conjugate Gradient convergence bounds,  
a halving of the condition number would imply a 1.4
reduction in the iteration number, the cost of a single iteration being doubled.

The efficiency of such a Newton method can however be increased
due to the following result:
\begin{theorem}
	\label{newtTh}
	Let $\alpha_j, \beta_j$ be the smallest and the largest eigenvalues of $P_j A$.

	If $0 < \alpha_j <  1  < \beta_j \le 2 - \alpha_j$ then $[\alpha_{j+1}, \beta_{j+1}]
	\subset [2 \alpha_j - \alpha_j^2, 1]$.
\end{theorem}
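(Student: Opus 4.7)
The plan is to reduce the claim to a scalar statement about the parabola $f(\mu) = 2\mu - \mu^2 = 1 - (1-\mu)^2$. Since $P_{j+1} = 2P_j - P_j A P_j$ and (by an easy induction starting from $P_0 A = A P_0$) the matrices $P_j$ and $A$ commute, we have $P_{j+1} A = 2 P_j A - (P_j A)^2 = f(P_j A)$. By the spectral mapping theorem applied to the symmetric matrix $P_j A$, the eigenvalues of $P_{j+1} A$ are exactly $\{f(\mu) : \mu \in \sigma(P_j A)\}$. So it is enough to bound $f$ on the interval $[\alpha_j,\beta_j]$.

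Next I would analyze $f$ on $[\alpha_j,\beta_j]$. Since $\alpha_j < 1 < \beta_j$, the unique maximum $\mu = 1$ of $f$ lies inside the interval, and $f(1) = 1$; this immediately gives $\beta_{j+1}\le 1$. For the lower bound, because $f$ is a downward parabola symmetric about $\mu = 1$, its minimum on the interval is attained at whichever endpoint is farther from $1$. The hypothesis $\beta_j \le 2 - \alpha_j$ can be rewritten as $\beta_j - 1 \le 1 - \alpha_j$, so $\alpha_j$ is the farther endpoint (or they tie). Therefore $\min_{[\alpha_j,\beta_j]} f = f(\alpha_j) = 2\alpha_j - \alpha_j^2$, which yields $\alpha_{j+1} \ge 2\alpha_j - \alpha_j^2$ and completes the inclusion $[\alpha_{j+1},\beta_{j+1}] \subset [2\alpha_j - \alpha_j^2, 1]$.

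There is no real obstacle here; the only subtle point is matching the algebraic hypothesis $\beta_j \le 2-\alpha_j$ with the geometric fact that $f$ is symmetric about its vertex at $\mu=1$, so that the minimum on $[\alpha_j,\beta_j]$ is actually achieved at the left endpoint $\alpha_j$ rather than the right endpoint $\beta_j$. Once that symmetry observation is made, the rest follows from a one-line computation and the spectral mapping argument above.
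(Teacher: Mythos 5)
Your proof is correct and follows essentially the same route as the paper's: both pass through the spectral mapping $\lambda^{(j+1)} = f(\lambda^{(j)})$ with $f(t) = 2t - t^2$ and then observe that $f$ maps $[\alpha_j, 2-\alpha_j]$ (hence $[\alpha_j,\beta_j]$) into $[f(\alpha_j),1]$. Your version merely spells out the details the paper leaves implicit, namely the commutativity giving $P_{j+1}A = f(P_jA)$ and the symmetry of the parabola about $t=1$ that places the minimum at the left endpoint.
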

\begin{proof}
	Every eigenvalue of $P_{j+1} A $, $\lambda_i^{(j+1)}$ satisfies $\lambda_i^{(j+1)} = 
	f(\lambda_i^{(j)}) $ where the function  $f(t) = 2t - t^2$
	maps the interval $[\alpha_j, 2 - \alpha_j]$ into
	$[f(\alpha_j), 1]$.
\end{proof}

\noindent
If $\beta_j = 2-\alpha_j$ then the reduction in the condition number from $P_j A$ to $P_{j+1}A$ is near 4
provided that $\alpha_j$ is small:
\[ \frac{\kappa(P_j A)}{\kappa(P_{j+1} A)} = \frac{2-\alpha_j}{\alpha_j} (2\alpha_j - \alpha_j^2)
= (2 - \alpha_j)^2 \approx 4.\]
Under these hypotheses each Newton step provides an average halving of the CG iterations (and hence
of the number of scalar products) as
opposed to twice the application of both the coefficient matrix and the initial preconditioner. 
This idea  can be efficiently employed when $P_0 = I$ to cheaply obtain a {\bf polynomial preconditioner}.
{\blue This also includes diagonal preconditioning since the original linear system, $\hat A \hat \fx = \hat \fb $ can be
symmetrically scaled by the diagonal of $\hat A$, $D = \text{diag}(\hat A)$ obtaining  the system
$A\fx = \fb$ where $A = D^{-1/2} A D^{-1/2}$.}

At the first Newton stage the preconditioner must be scaled by $\zeta_0 = \dfrac{2}{\alpha_0+\beta_0}$ in order to 
satisfy the hypotheses of Theorem \ref{newtTh}. Hence
the eigenvalues of 
$P_1 A = \left(2 \zeta_0 I - \zeta_0^2 A\right) A$ will lie in $[\alpha_1, \beta_1] $
where $\beta_1 = 1$ and $\alpha_1 = (2-\alpha_0 \zeta_0) \alpha_0 \zeta_0$ and the next scaling factor will
be $\zeta_1 = \dfrac{2}{1 + \alpha_1}$.
Analogously, at a generic step $j >1$, $\alpha_j = (2-\alpha_{j-1} \zeta_{j-1}) \alpha_{j-1} \zeta_{j-1}$ and
$\zeta_j = \dfrac{2}{\alpha_j + 1}$. Finally, exploiting the relation $\alpha_{j-1} \zeta_{j-1}
= 2 - \zeta_{j-1}$ we can write
\begin{equation}
	\label{zetaNewt}
 \zeta_j = \dfrac{2}{1+ \zeta_{j-1} (2-\zeta_{j-1})} = 
	    \dfrac{2}{1+ 2\zeta_{j-1}-\zeta_{j-1}^2} .
\end{equation}
{\red 
Then the recurrence for the preconditioners is obtained from (\ref{newtonP}) by scaling $P_j$ with
$\zeta_j$ as
\begin{eqnarray}
	\label{newrec}
	P_{j+1} &=& 2 \zeta_j P_j - \zeta_j^2 P_j A P_j, \quad j = 0, \ldots, \qquad P_0 = I
\end{eqnarray}
}
\noindent
This suggests an analogous recurrence for the polynomials of degree $k=2^j-1, j = 0, \ldots$ as
\begin{eqnarray*}
	p_0(x) &=& 1 \nonumber \\
	p_{2^{j+1}-1}(x) &=&  {\red 2} \zeta_j p_{2^j-1}(x) - \zeta_j^2 x\, p_{2^j-1}^2(x), \quad j = 0, \ldots, 
\end{eqnarray*}
Finally, setting $r_{2^j-1}(x) = \zeta_j p_{2^j-1}(x)$ we can write a slightly more efficient recursion, as
\begin{eqnarray}
	\label{recpolr}
	r_0(x) &=& \zeta_0 \nonumber \\
	r_{2^{j+1}-1}(x) &=&  \zeta_{j+1} \left({\red 2} r_{2^j-1}(x) - x\, r_{{2^j-1}}^2(x)\right), \quad j = 0, \ldots. 
\end{eqnarray}
\begin{algorithm}[h!]
	\begin{algorithmic}[1]
	\caption{Newton-based polynomial preconditioner}
		\label{NewtAlg}
		\State Approximate the extremal eigenvalues of $A$: $\alpha_0, \beta_0$.
		\State Set the number of Newton steps: \texttt{nlev}  
		\smallskip
		\State Set $\zeta_0 = \dfrac{2}{\alpha_0 + \beta_0}, \quad  
		\zeta_1 = \dfrac{2}{1+ 2\alpha_0\zeta_{0}-(\alpha_0\zeta_{0})^2},  \quad
		\zeta_i = \dfrac{2}{1+ 2\zeta_{i-1}-\zeta_{i-1}^2}, \quad i = 2, \texttt{nlev}.$  
		\smallskip
    \State Solve $A\fx = \fb$ by CG accelerated with the polynomial preconditioner $P_{\texttt{nlev}}$.

		\State Recursive application  of $P_{\rm nlev}$ to a vector   $\fu$ at each PCG iteration
			\begin{eqnarray}
				\label{recursive}
				P_0 \fu & = & \zeta_0 \fu \nonumber \\
				P_{j+1} \fu &=&  \zeta_{j+1} \left(2 P_j \fu - P_j A P_j\fu\right) , \qquad
				j = \texttt{nlev} -1, \ldots, 0
			\end{eqnarray} 
	\end{algorithmic}
\end{algorithm}

Our polynomial preconditioner is then defined as $P_j = r_{2^j-1}(A)$. Its application
to a vector, in view of (\ref{recpolr}) is described in Algorithm \ref{NewtAlg}.

We also provide in Figure \ref{matlabalg} the very simple Matlab function for the application of the preconditioner
within the PCG procedure.
\smallskip

\begin{figure}[h!]
	\begin{mdframed}
\begin{lstlisting}
function p_res = applyrec(zeta,nlev,A,res)
if nlev > 0
   u =applyrec(zeta,nlev-1,A,res);
   v = A*u;
   w = applyrec(zeta,nlev-1,A,v);
   p_res = zeta(nlev)*(2 u - w);
else
   p_res= zeta(1)*res;
end
\end{lstlisting}
	\end{mdframed}
	\caption{Matlab recursive function for the application of the Newton-based polynomial preconditioner}
	\label{matlabalg}
\end{figure}

\smallskip
\noindent

\section{Chebyshev preconditioners}
In this Section we recall the main steps to arrive at the iterative definition of the polynomial
preconditioner based on the Chebyshev polynomials of the first kind. More details
can be found in \cite{Saad03}.
\label{ChebSec}
The optimal polynomial preconditioner $q_k(x)$ for the CG method should minimize the condition number of 
$P_k A$ for a given degree $k$. This problem can be formulated as
\[ \text{Find} \  p_k \in \Pi_k \  \text{such that} \   p_k = \argmin_{\substack{\hspace{-4mm} p_k \in \Pi_k}} \max _{\lambda \in \sigma(A)} |1 - p_k(\lambda) \lambda|, \]
where $\Pi_k$ is the set of polynomials of degree $k$ at most.
Since this problem can not be solved without knowing all the eigenvalues of $A$, it is
replaced by the following problem
\begin{equation}
	\label{maxmin}
 \text{Find} \  p_k \in \Pi_k \  \text{such that} \  p_k =
	\argmin_{\substack{\hspace{-4mm} p_k \in \Pi_k}} 
	\max _{\lambda \in I} |1 - p_k(\lambda) \lambda| = 
	\argmin_{\substack{\hspace{-5mm} q_{k+1} \in \Pi_{k+1}\\ \hspace{-5mm} q_{k+1}(0) = 1}} 
	\max _{\lambda \in I} |q_{k+1}(\lambda) |
\end{equation}
where $q_{k+1}(x) = 1-xp_k(x)$ and  $I = [\alpha, \beta] \supset [\lambda_1, \lambda_n]$, whose solution requires an approximate
knowledge of the extremal eigenvalues of $A$.
The polynomial  that solves (\ref{maxmin}) is the shifted and scaled Chebyshev polynomial 
of degree $k+1$ \cite{cheney}
\begin{equation}
\label{cheb}
 q_{k+1}(x) = \frac{T_{k+1}\left(\frac{\alpha + \beta - 2x}{\beta-\alpha}\right)}{T_{k+1}\left(\frac{\alpha+\beta}{\beta-\alpha}\right)}. \end{equation}
The wanted optimal polynomial for preconditioning 
is therefore $p_k(x) = x^{-1} \left(1-q_{k+1}(x)\right)$. 
Exploiting the well-known three-term recursion for the Chebyshev polynomials:
\begin{equation}
	\label{3term}
	T_{k+1}(x) = 2x T_k(x) - T_{k-1}(x), \qquad T_1(x) = x, \qquad T_0(x) = 1,
\end{equation}
we can develop a recurrence also for the polynomials $\{p_k(x)\}$.
We set \[\theta = \frac{\beta+\alpha}{2}, \quad 
	 \delta = \frac{\beta-\alpha}{2}, \quad \text{and} \quad \sigma = \frac{\theta}{\delta} \]
	 so that we can rewrite (\ref{cheb}) as
	\begin{equation} 
	\label{qT} 
	 q_{k+1}(x) = \frac{T_{k+1}\left(\sigma - \frac{x}{\delta}\right)}{T_{k+1}(\sigma) }
		= \frac{T_{k+1}\left(\sigma - \frac{x}{\delta}\right)}{\sigma_{k+1}}, \qquad
		\text{with} \ \sigma_{k+1} = T_{k+1}(\sigma)
	\end{equation} 
The $q_k$'s satisfy a recursion analogous to (\ref{3term})  as:
\begin{equation}
\label{q}
 q_{k+1}(x) = \frac{1}{\sigma_{k+1}} \left(2(\sigma - \frac{x}{\delta}) 
	\sigma_k q_k(x) - \sigma_{k-1} q_{k-1}(x) \right) , \quad q_1(x) = 1 -\frac{x}{\theta}, \quad q_0(x) = 1. 
\end{equation}
Noticing that the denominator of (\ref{qT})  satisfies the recursion, for $k \ge 1$,
\[\sigma_{k+1} = 2 \sigma \sigma_k - \sigma_{k-1}, \quad \sigma_1 = \sigma, \quad \sigma_0 = 1, \]
and defining $\rho_k = \dfrac{\sigma_k}{\sigma_{k+1}}$ we rewrite  (\ref{q}) as
\begin{equation}
\label{q1}
	q_{k+1}(x) = \rho_{k} \left(2\left(\sigma - \frac{x}{\delta}\right) q_k(x) - \rho_{k-1} q_{k-1}(x) \right) 
\end{equation}
with \begin{equation}
	\label{rhocheb}
	\rho_k = \dfrac{1}{2 \sigma - \rho_{k-1}}, \ k \ge 1 \quad \text{and} \quad \rho_0 = \dfrac{1}{\sigma}.
\end{equation}
To obtain an explicit expression for our preconditioner it remains to develop a recursion
for the sequence of polynomials $\{p_k(x)\}$. To this aim we write $q_k(x)$ in terms of $p_k(x)$ as
$q_{k+1}(x) = 1 - x p_k(x)$ and substitute this expression into (\ref{q1}) obtaining $p_{-1}(x) = 0$,
$p_0(x) = \dfrac{1}{\theta}$ and, for $k \ge 1$,
\[
	1 - x p_{k}(x)  = 
	\rho_{k} \left(2\left(\sigma - \frac{x}{\delta}\right) (1-x p_{k-1}(x)) - \rho_{k-1} (1-x p_{k-2}(x)) \right).  \]
From which  we obtain the recursion (see e.g. \cite{MR2169217}) 
\begin{eqnarray*}
	p_{-1}(x) & = &  0 \nonumber \\
	p_0(x) & = &  \frac{1}{\theta} \nonumber \\
	p_k(x) & = & \rho_k\left(2 \sigma \left(1 - \frac{x}{\theta}\right) p_{k-1}(x)
	- \rho_{k-1} p_{k-2}(x) + \frac{2}{\delta}\right), \qquad k \ge 1.
\end{eqnarray*}
The application of the Chebyshev preconditioner of degree $m$,  $P_m = p_m(A)$ within the PCG solver is 
described in Algorithm \ref{ChebAlg}.

\begin{algorithm}
	\begin{algorithmic} [1]
		\caption{Computation  of the preconditioned residual $\hat \fr = P_m \fr$ with Chebyshev preconditioner.}
		\label{ChebAlg}
		\State Compute $\rho_k, k = 1,\ldots, m_{\max}$ using (\ref{rhocheb})
		\State $\fx_{old}  = \fr/\theta$ \quad \hspace{12.7mm} (\textit{if {m} $=0$ exit with 
		$\hat \fr = \fx_{old}$})
		\smallskip

		\State $\fx = \dfrac{2\rho_1}{\delta} \left(2\fr - \dfrac{A \fr}{\theta}\right)$
		\hspace{2.7mm}
		(\textit{if {m} $=1$ exit with 
		$\hat \fr = \fx$})

		\For {$k =2: {m}$}
		\smallskip
\State		$\fz = \dfrac{2}{\delta} \left(\fr - A \fx\right)$
\smallskip

\State		$\hat \fr = \rho_{k+1}\left(2\sigma \fx- \rho_k\fx_{old}+\fz\right)$
		\State          $ \fx_{old} = \fx; \ \fx = \hat \fr$.
\EndFor


	\end{algorithmic} 
\end{algorithm}
\subsection{Other recursions}
The algorithm for the Chebyshev preconditioner can be greatly simplified by taking into account
the following relation involving Chebyshev polynomials:
\[ T_{2k}(x) = 2 T_k^2(x) - 1.\]
Proceeding as before we can define a recursion for the shifted and scaled polynomials as:
{\red \[ q_{2k}(x) = \frac{1}{\sigma_{2k}} \left(2 \sigma_k^2 q_k^2(x) - 1\right) \]}
where $\sigma_{2k} = 2\sigma_k^2 -1$, and finally a formula for the $p_k$'s as:
\begin{equation}
	p_{2k-1} (x) = \frac{2 \sigma_{k}^2}{\sigma_{2k}} \left(2 p_{k-1}(x) -  x p_{k-1}^2(x)\right),
	\quad k \ge 1, \qquad p_0(x) = \frac{1}{\theta} 
	\label{recur}
\end{equation}
which resembles formula  (\ref{recursive}). Actually the two formulae are mathematically equivalent
as proved in the following Theorem
\begin{theorem}
	Let  $\chi_{j} = \dfrac{2 \sigma_{k}^2}{\sigma_{2k}}, \ j = \log_2 k$, then the sequence  (\ref{recur})
satisfies the relation (\ref{recpolr}).
\end{theorem}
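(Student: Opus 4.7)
The plan is to show that the two polynomial sequences generated by (\ref{recpolr}) and (\ref{recur}) coincide by matching their initial polynomials and their scaling coefficients modulo a one-step index shift. Taking $\alpha_0=\alpha$ and $\beta_0=\beta$ (the natural identification of the spectral bounds used in the two schemes), the Newton initial value is $\zeta_0 = 2/(\alpha+\beta) = 1/\theta$, so $r_0(x) = \zeta_0 = 1/\theta = p_0(x)$. It is therefore enough to prove that $\chi_j = \zeta_{j+1}$ for every $j \ge 0$; the two recursions are then formally identical step by step.

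For the base case $j=0$ I evaluate both sides directly. By definition $\chi_0 = 2\sigma_1^2/\sigma_2 = 2\sigma^2/(2\sigma^2-1)$. On the Newton side, Algorithm \ref{NewtAlg} gives $\zeta_1 = 2/(1 + 2\alpha_0\zeta_0 - (\alpha_0\zeta_0)^2)$. Writing $1-\alpha_0\zeta_0 = (\beta_0-\alpha_0)/(\beta_0+\alpha_0) = 1/\sigma$, the denominator becomes $2-(1-\alpha_0\zeta_0)^2 = 2-1/\sigma^2 = (2\sigma^2-1)/\sigma^2$, which yields $\zeta_1 = 2\sigma^2/(2\sigma^2-1) = \chi_0$.

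For the inductive step I introduce the shorthand $s_j := \sigma_{2^j}$, so that the Chebyshev doubling identity $T_{2k}(x) = 2T_k^2(x)-1$ produces $s_{j+1} = 2s_j^2-1$. Then $\chi_j = 2s_j^2/s_{j+1} = (s_{j+1}+1)/s_{j+1}$ and $2-\chi_j = (s_{j+1}-1)/s_{j+1}$, whence $\chi_j(2-\chi_j) = (s_{j+1}^2-1)/s_{j+1}^2$. Consequently $1 + 2\chi_j - \chi_j^2 = (2s_{j+1}^2-1)/s_{j+1}^2 = s_{j+2}/s_{j+1}^2$, and therefore $2/(1+2\chi_j-\chi_j^2) = 2s_{j+1}^2/s_{j+2} = \chi_{j+1}$. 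This is exactly the recurrence (\ref{zetaNewt}) satisfied by the $\zeta$'s, so combined with the base case $\chi_0 = \zeta_1$ induction gives $\chi_j = \zeta_{j+1}$ for all $j \ge 0$.

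The main obstacle I anticipate is the slightly asymmetric treatment of the very first Newton step: the formula for $\zeta_1$ involves $\alpha_0$ explicitly, whereas the uniform recurrence (\ref{zetaNewt}) only kicks in for $i \ge 2$. This is precisely what forces the single-step index shift between the Chebyshev and Newton families and is the reason why the verification of the base case is a genuine, if short, computation. Once the simplification $1-\alpha_0\zeta_0 = 1/\sigma$ is in hand, the Chebyshev doubling identity carries the induction through without further subtlety.
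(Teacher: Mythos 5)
Your proof is correct and follows essentially the same route as the paper's: you match the initial polynomials, verify the first scaling coefficient through the identity $1-\alpha_0\zeta_0 = 1/\sigma$, and then show by induction that the Chebyshev-derived coefficients obey the recurrence (\ref{zetaNewt}), using the doubling identity $\sigma_{2k}=2\sigma_k^2-1$ exactly as the paper does. Your only departure is cosmetic but welcome: you state the correspondence as $\chi_j=\zeta_{j+1}$, which makes explicit the one-step index shift that the paper's own statement ($j=\log_2 k$) leaves slightly inconsistent with its proof.
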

\begin{proof}
	We show that the polynomials $p_{2^j-1}(x)$ defined by the recurrence (\ref{recur})
	coincide with the polynomials $r_{2^j-1}(x)$ of (\ref{recpolr}).  
	As  $p_0 = r_0$, it is sufficient to prove that 
	\[\zeta_{j}  = \dfrac{2\sigma_k^2}{\sigma_{2k}} \equiv \chi_{j}, \qquad j \ge 1, \qquad 
	k = 2^j. \]
	First, observe that $\dfrac{1}{\sigma} = 1 - \dfrac{\alpha}{\theta} = 1 - \alpha_0 \zeta_0$, then  
	\[ \chi_1 = 
	 \frac{2\sigma^2}{2\sigma^2-1} = \frac {2}{2 - (\sigma^{-1})^2} 
	  = \frac {2}{1 + 2 \alpha_0 \zeta_0 - \alpha_0^2 \zeta_0^2} = \zeta_1.\] 
	Finally, for $j > 1$,
	\[ \chi_{j+1} =  \dfrac{2 \sigma_{k}^2}{\sigma_{2k}} 
	=  \dfrac{1+\sigma_{2k}}{\sigma_{2k}}   = \frac{1}{\sigma_{2k}} + 1 \quad
	\Longrightarrow \quad \sigma_{2k} = \frac{1}{\chi_{j+1}-1} \quad (\text{and hence } 
	\sigma_{k} = \frac{1}{\chi_{j}-1}) 
	.\]
	Then 
	\[  \chi_{j+1} =  \frac{1}{\sigma_{2k}} + 1 = 
	\frac{1}{2\sigma_k^2-1} +1 =
	\frac{2\sigma_k^2}{2\sigma_k^2-1} =
	\frac{2}{2-(\sigma_k^{-1})^2} =
	\frac{2}{2-(\chi_j-1)^2} =
	\frac{2}{1 + 2\chi_j - \chi_j^2},\]
	which is the (\ref{zetaNewt}).
\end{proof}

We have proved that the scaled Newton polynomials and the Chebyshev polynomials
are the same. One can use either the recursive version (Algorithm \ref{NewtAlg}) or the 
iterative version (Algorithm \ref{ChebAlg}) with no difference in exact arithmetics.
Due to this equivalence we will call our preconditioner: Newton-Chebyshev (NC in short) polynomial
preconditioner.
\section{The optimal parameters are not optimal}
\label{scale}
Supposing  that the extremal eigenvalues are exactly known, the best performance of the PCG
method is not necessarily  achieved when the condition number of the preconditioned matrix is minimized.
Actually the NC polynomial preconditioner, while reducing the spectral interval
and the condition number of $P(A) A$ provides a clustering of the extremal eigenvalues.
\begin{figure}[h!]
	\begin{center}
	\begin{minipage}{7.4cm}
	\includegraphics[width=7.4cm]{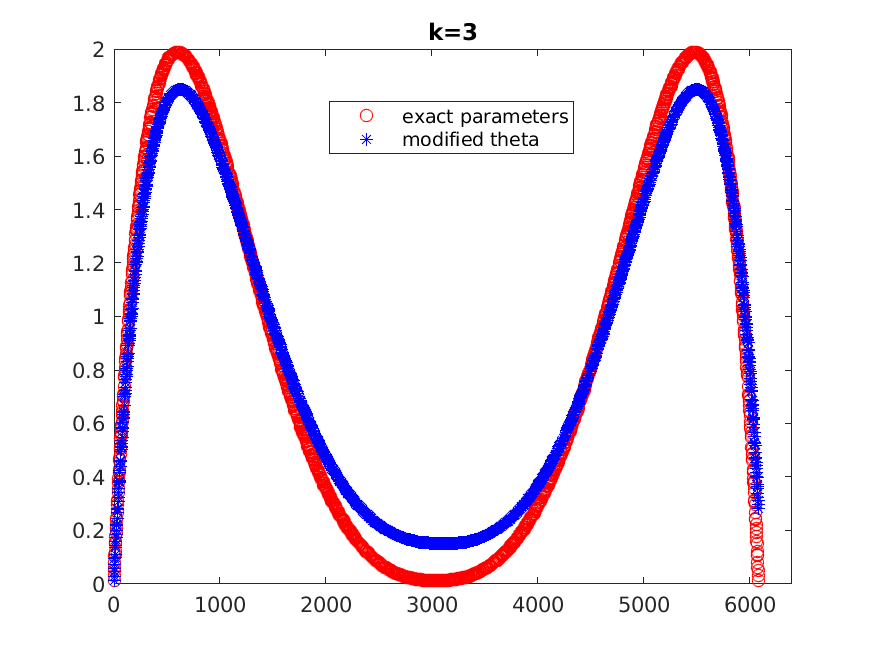}
	\end{minipage}
	\begin{minipage}{7.4cm}
	\hspace{-1mm}
	\includegraphics[width=7.4cm]{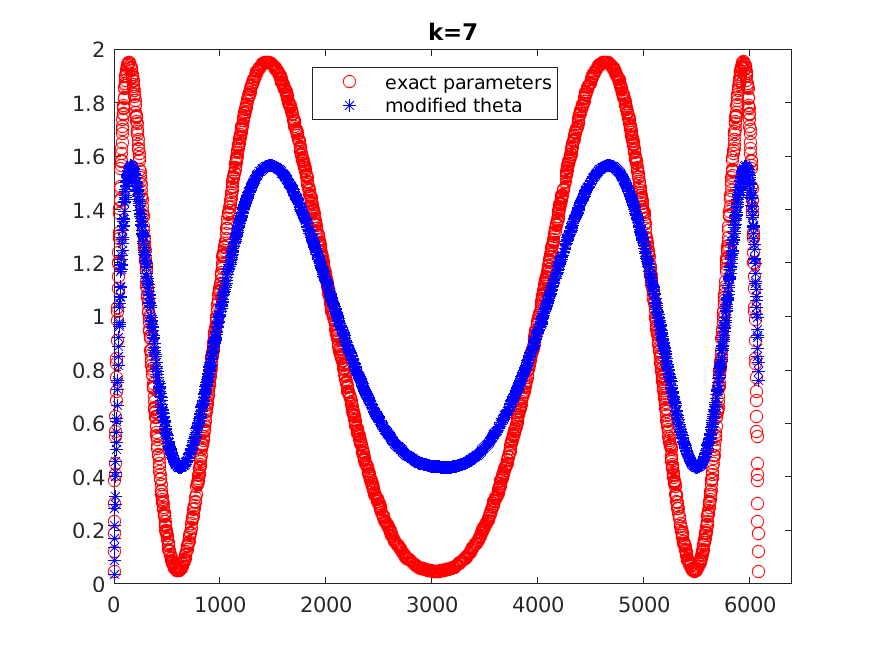}
	\end{minipage}

	\begin{minipage}{7.4cm}
	\includegraphics[width=7.4cm]{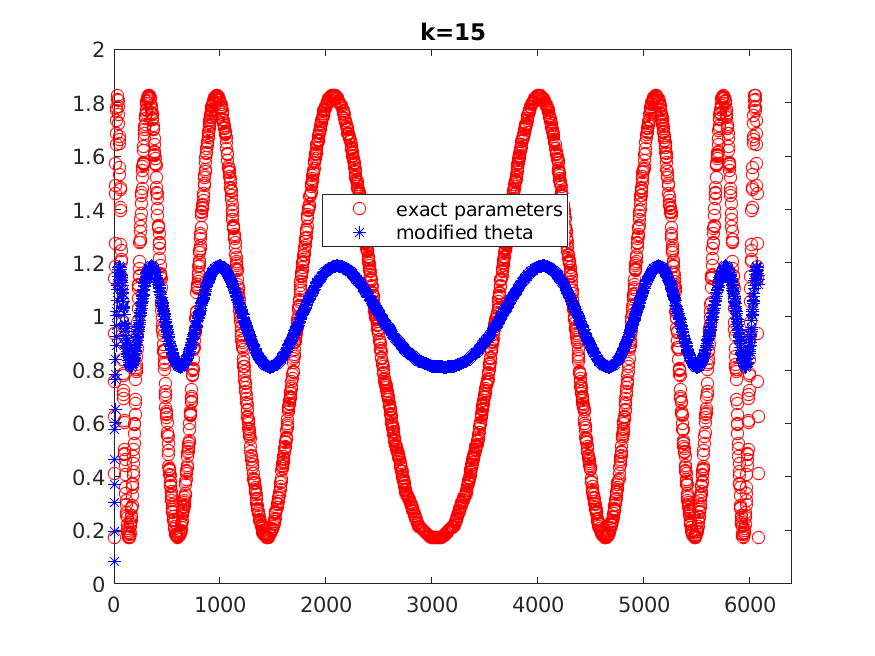}
	\end{minipage}
	\begin{minipage}{7.4cm}
	\hspace{-1mm}
	\includegraphics[width=7.4cm]{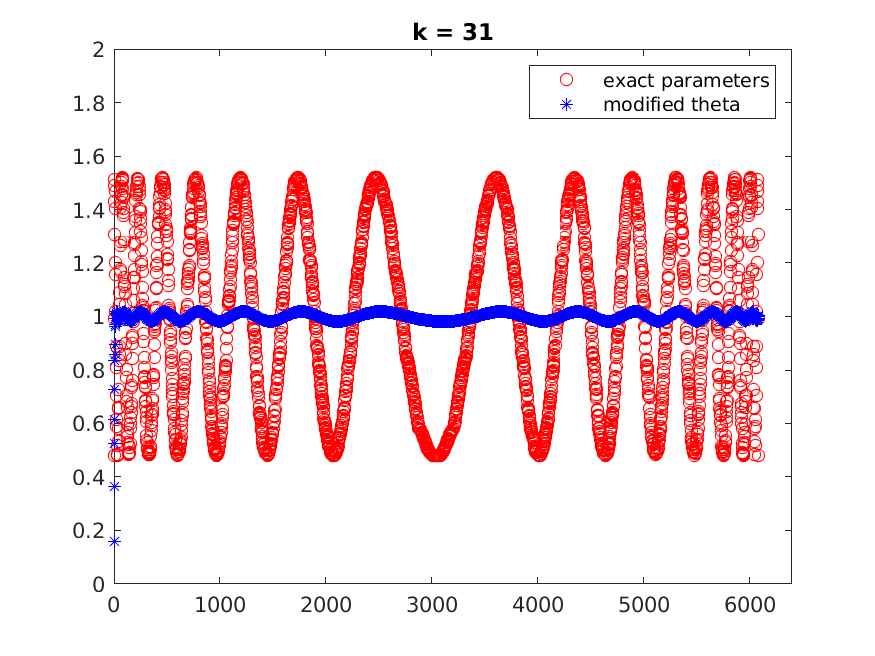}
	\end{minipage}
	\caption{Eigenvalue distribution of $P_k(A) A$ using the exact parameters (red circles) and
	modified {$\theta$}-value (blue stars) for different polynomial degrees} 
	\label{eigdis}
	\end{center}
\end{figure}

To clarify the situation we constructed the exact Chebyshev polynomials 
for the FD discretization of the Laplacian matrix in the unitary square
of size 6084 whose exact eigenvalues are known. In Figure \ref{eigdis} we provide the eigenvalue
distribution (red circles) of the preconditioned matrix $P_k(A) A, k = 3, 7, 15, 31.$
In the same picture we also provide the same plots, in which, however, the  initial value of 
$\theta$
has been slightly modified by
multiplying it by $1.01$
(the same result would have been obtained by reducing $\zeta_0 = \dfrac{1}{\theta}$ in the Newton-based
approach).  The eigenvalue distribution is represented with blue stars in this case.  
{\blue 
The meaning of the figure is as follows:
		a circle/star with coordinate $(s,y)$  represents an eigenvalue of the
		preconditioned matrix $p_k(A) A$, namely
		$ y = \lambda_s p_k(\lambda_s)$, where $\lambda_s$ is the $s$-th eigenvalues
		of $A$ in increasing order.}

Employing the Chebyshev preconditioner with exact parameters, the condition number is minimized but
a clear clustering of the smallest eigenvalues is produced ({\blue see the bottom part of the red plots 
in Figure \ref{eigdis}} and
 also Table \ref{comp80}, where 
the values of the indicator $l$, defined in (\ref{lmin}), are shown).
Slightly increasing the parameter $\theta$ yields an asymmetric spectrum of the preconditioned matrices
which avoids clustering especially of the smallest eigenvalues which are very well separated.
This behavior is known to speed-up the PCG convergence.

\begin{table}[h!]
	\caption{PCG iterations for solving the $78^2$ discretized Laplacian in the unit square
with polynomial preconditioner of degree $0, 1, 3, \ldots, 31$. The extremal eigenvalues, the number $l$ of eigenvalues close to the minimum 
and the condition number of the preconditioned matrices are also reported.}
	\label{comp80}
	\begin{center}
	\begin{tabular}{rrrcrr|rrcrr}
		&		 \multicolumn{5}{c}{Original NC algorithm}      
		& \multicolumn{5}{c}{NC with  $\theta$ scaled by $1.01$} \\
		$m$ & iter & $\mu_{\max}$   & $\mu_{\min}$ & $l$ & $ \kappa (P_m A)$ &
		iter & $\mu_{\max}$ &  $\mu_{\min}$ &$l$ &  $ \kappa (P_m A)$ \\
		\hline
		0    & 223   & 1.9992 &  7.9060e-04 &1&  2528.7&223&    1.9794 &  7.8278e-04 &1&   2528.7 \\
		1    & 111   & 1.9968 &  3.1562e-03 &2&   632.7&112&   1.9584  & 3.0647e-03  &1&   639.0 \\
		3    & 115   & 1.9875 &  1.2526e-02 &188&   158.7&61&  1.8493   &1.1318e-02   &1&  163.4 \\
		7    &  58   & 1.9514 &  4.8580e-02 &278&    40.2&31& 1.5640   &3.5202e-02   &1&   44.4 \\
		15    &  30   & 1.8268 &  1.7318e-01 &468&    10.5&17&1.1891   &8.2247e-02    &1&  14.5 \\
		31    &  15   & 1.5193 &  4.8067e-01 &874&     3.2&11&1.0182  & 1.6060e-01    &1&   6.3 \\
	\end{tabular}
	\end{center}
	\end{table}

Indeed in Table \ref{comp80} the reported results of the run for  the polynomial preconditioners of degree
$2^j-1,  j = 0, \ldots, 5$ confirm that the scaling the Newton-Chebyshev polynomial
preconditioner highly improves its  performance as compared to using the \textit{optimal} parameters.
In the same Table we report the number of eigenvalues of the preconditioned matrix which are close
to the minimum as 
\begin{equation}
	\label{lmin}
l = \# \left\{\lambda \, :  \, \frac{\lambda}{\lambda_{\min}} < 1.1\right\} .
	\end{equation}
With the scaled NC algorithm the smallest eigenvalue is isolated while
with optimal parameters the number $l$ increases with the degree of the polynomial.
\section{Numerical Results}	
\label{numris}
We now report the results of numerical experiments to solve very large and sparse matrices,
most of them arising
from real engineering applications. In detail,
\begin{itemize}
	\item \texttt{Opt\_Transp} arises from the Finite Element discretization of the transient optimal transport problem \cite{BFMPJcam17}.
	\item \texttt{Lap1600}: is the Laplacian on the unitary square with $1598^2$
interior grid points.
	\item {\tt Cube\_5317k}: arises from the equilibrium of a concrete cube discretized by a
regular unstructured tetrahedral grid.
\item {\tt Emilia\_923}:  arises from the regional geomechanical model of a deep hydrocarbon
	reservoir~\cite{FERRONATO20101918}.  It is obtained discretizing the  structural problem
 with tetrahedral Finite Elements.
Due to the complex geometry of the geological formation it was
not possible to obtain a computational grid characterized by
regularly shaped elements.
\end{itemize}
		The size and nonzero numbers of these problems are reported in Table \ref{size}.
		\begin{table}[h!]
			\caption{Size $n$, number of nonzeros \texttt{nnz} and spectral condition number $\kappa = \dfrac{\lambda_{n}}{\lambda_{1}}$  (computed
			after symmetric diagonal scaling) of the test matrices. Regarding matrix  \texttt{Opt\_Trans} the condition number has been computed
			as $\kappa = \dfrac{\lambda_{n}}{\lambda_{2}}$ since $\lambda_1 = 0$.}
\label{size}
\begin{center}
\begin{tabular}{|l|r|r|c|}
\hline
	name & $n$ &  \texttt{nnz} & $ \kappa(A)$  \\
\hline
	\texttt{Opt\_Trans} & 412417 & 2\,882817  & 1.04 $\times 10^6$\\
	\texttt{Lap1600} & 2\,553604 & 12\,761628 & 2.42 $\times 10^5$\\
	\emi & 923136 & 41\,005206  & 3.08 $\times 10^5$\\
	\CC &  5\,317443 & 222\,615369 & 3.30 $\times 10^6$\\
\hline
\end{tabular}
\end{center}
		\end{table}

    In the following results we will employ a polynomial of degree $m = 2^{\rm nlev}-1$,
    with various values of the parameter {\texttt nlev} which also counts the Newton iterations.
The scaling factor was set to $1.001$ for all problems. {\blue All matrices are preliminary
diagonally scaled before solving the corresponding linear system}.
{\blue
We consider as the exact solution a vector with all ones and computed the right hand side accordingly.
Unless differently stated, we stop the PCG iteration as soon as the relative residual norm is below $\texttt{tol} = 10^{-8}$.}

\subsection{Sequential tests}
As common when dealing with polynomial preconditioners, the main issue is to cheaply assess the extremal
eigenvalues. In the numerical results reported below we approximated $\beta_0$ with few iterations
of the power method and $\alpha_0$ with the non preconditioned DACG method \cite{bgp97nlaa} up
to $10^{-2}$ tolerance on the relative residual.  The sequential tests have been performed using Matlab on
on an Intel Core 2 Quad at 3.50GHz, each core being equipped with 16Gb RAM.

The results reported in Table \ref{NewCheb} refer to matrices \texttt{Opt\_Transp}, 
\texttt{Lap1600} and \CC.

\begin{table}[!ht]
	\caption{Results of the NC polynomial preconditioner for matrices \texttt{Opt\_Transp},
	\texttt{Lap1600}  and \CC for various degrees of the polynomial preconditioner. For the \CC matrix
	the tolerance was set to $10^{-12}$.}
	\label{NewCheb}
\begin{center}
\begin{tabular}{r|rrrr|c|rr|c|rrr}
&\multicolumn{5}{c|}{Matrix  \texttt{Opt\_Transp}} & 
	\multicolumn{3}{c|}{Matrix  \texttt{Lap1600}}  &
	\multicolumn{3}{c}{Matrix  \CC}  \\
	\hline
	$m$          	& iter & ddot  & $A\times \fv$ & $\|\fr_k\|/\|\fb\|$  & CPU(s)
	& iter & $\|\fr_k\|/\|\fb\|$  
	& CPU(s) & iter & CPU(s) &
	$\|\fr_k\|/\|\fb\|$   \\
\hline
0   & 3433   & 10299 & 3433 & 9.85e-09  &  26.39 & 4517   & 9.75e-09  & 203.52 & 9037 & 3040.0 & 9.94e-13 \\
1   & 1773   & 5319 & 3536 &  9.70e-09  &  23.25 & 2313   & 9.99e-09 &  176.58 & 4604 & 2990.9 & 9.98e-13\\
3   &  879   & 2637 & 3516 &  9.92e-09  &  20.95 & 1174   & 9.87e-09 &  160.50 & 2413 & 3044.7 & 9.96e-13\\
7   &  439   & 1317 & 3512 &  8.85e-09  &  19.86 &  589   & 9.40e-09 &  151.79 & 1204 & 2999.6 & 9.52e-13\\
15  &  222   & 666  & 3552 &  8.39e-09  &  19.59 &  295   & 9.92e-09 &  147.83 &  604 & 2988.5 & 9.66e-13\\
31  &  117   & 351  & 3744 &  7.78e-09  &  20.33 &  149   & 9.50e-09 &  146.80 &  304 & 2996.9 & 9.97e-13 \\
63  &   69   & 207  & 4416 &  5.49e-09  &  23.85 &   77   & 7.09e-09 &  151.17 &  156 & 3069.9 & 8.92e-13
\end{tabular}
\end{center}
\end{table}

\noindent
Some comments are in order. The good news are that, 
	apart from an obvious decrease of the number of scalar products:
\begin{enumerate}
	\item Assessment of extremal eigenvalues is relatively cheap. It took only 0.69 seconds for the \texttt{Opt\_Transp} matrix,  1.33 seconds for the Laplacian and 
	$17.4$ seconds for the \CC matrix.
\item The norm of the {\em true} residual at convergence decreases with $m$, confirming the 
improved conditioning of the preconditioned matrix.
\item The CPU time decreases by 15\% -- 25\% by increasing the polynomial degree from $m=0$ to $m=15$.
	This does not hold for the matrix \CC for which the cost of the matrix-vector products is predominant over
		the scalar products due to the high number of average nonzeros per row.
\end{enumerate}

\smallskip
{\blue

\noindent
{\bf Remark}. We do not claim that our polynomial preconditioner
can compare favorably with other well-known sequential accelerators such as the Incomplete Cholesky preconditioner.
We report, however, the performance of this preconditioner (as implemented by the Matlab function
\texttt{ICHOL}($\delta$), $\delta$ being the drop tolerance) in combination with the CG solver for the three analyzed matrices. We also report the density
of the Cholesky factor as $\rho = $ nonzero($L$)/nonzero($A$) (which is a measure of the increased storage demand of this preconditioner).

\begin{center}
\begin{tabular}{l|l|rrr}
	matrix & $\delta$ & $\rho$ & Iter & CPU \\
	\hline 
	\texttt{Lap1600}& no fill & 0.5& 1344  &   151.02 \\
	\texttt{Opt\_Transp}  & $10^{-4}$  &1.87 & 201 & 7.32\\
	\CC & $10^{-4}$ & \multicolumn{3}{c}{negative pivot encountered} \\
	\CC & $10^{-5}$ & \multicolumn{3}{c}{out of memory} \\[.3em]
\end{tabular}
\end{center}

\noindent
Number of iterations and CPU times are smaller than with the polynomial preconditioner, which, by contrast, 
does not require additional memory, is completely matrix free and easily parallelizable. Moreover
we could not compute the IC factorization of the larger matrix \CC due to memory limitations.
}
\subsection{Numerical Results on a Parallel Platform}
	The polynomial preconditioner is based on matrix-vector products and no scalar products. 
This feature can be successively exploited on parallel architectures since, as known,
when a high number of processors is employed, 
the dot product, being the only task that involves a collective communication, 
reveals a bottleneck for the parallel efficiency.

An efficient implementation of a parallel matrix vector product is obviously  mandatory to achieve high parallel efficiency.
In this paper we use an improved  MPI-Fortran routine as successfully experimented in \cite{mbcv06}.
We used a block row distribution of the coefficient matrix with
complete consecutive rows assigned to different processors.

All tests have been performed on the new HPC Cluster Marconi
at the CINECA Centre, on both the A1 version (1512 nodes,  2 $\times$ 18-cores Intel Xeon E5-2697 v4 (Broadwell) at 2.30 GHz)
and the more recent A2 update (with 3600 nodes and 1$\times$ 68-cores
Intel Xeon 7250 CPU (Knights Landing) at 1.4GHz).
The Broadwell nodes have 128 Gb memory each, while  in the A2 system the RAM is subdivided
into 16GB of MDRAM and 96GB of DDR4.
The Marconi Network type
is: new Intel Omnipath, 100 Gb/s. (MARCONI is the largest Omnipath cluster of the world).

Throughout the whole section we will denote with $T_\p$ the CPU elapsed times
expressed in seconds (unless otherwise stated)
when running the code on $\p$ processors.
We include a relative measure of the parallel
efficiency achieved by the code. To this aim we will  denote as
$S_\p^{(n_0)}$, the pseudo speedup computed with respect to the smallest number of processors ($n_0$)
used to solve the given problem:
\[ S_\p^{(n_0)} = \frac{T_{n_0} \ n_0}{T_\p}. \]
We will denote $E_\p^{(n_0)}$ the corresponding relative parallel efficiency, obtained according to
\[E_\p^{(n_0)}= \dfrac{S_\p^{(n_0)}}{\p}  = \frac{T_{n_0}\  n_0}{T_\p {\p}} . \]

	\begin{table}[h!]
		\caption{Scalability analysis for the \emi matrix.}
	\label{emilia}
	\begin{center}
	\begin{tabular}{r|rrr|rrr|rrr|c}
		& \multicolumn{3}{c}{\texttt{nlev} $ = 5$}& \multicolumn{3}{c}{\texttt{nlev} $ = 2$}& \multicolumn{3}{c}{\texttt{nlev} $ = 0$} \\
		\hline
		& & & & & & & & &\\[-.3em]
		$\p$ & iter & $T_p$ & 		$E_p^{(16)}$ 
		& iter & $T_p$ & 		$E_p^{(16)}$ 
		& iter & $T_p$ & 		$E_p^{(16)}$ & $\dfrac{T_p(\texttt{lev} =0)}{T_p(\texttt{lev} =5)}$ \\[.5em]
		\hline
		16 & 379 & 114.44 && 3008 & 115.64 && 11386 & 117.15 && 1.02 \\
		64 & 379 &  33.33 &86\%& 3008 &  34.43 &84\%& 11382 &  37.74 &78\% & 1.13\\
		256 & 379 &  10.39 &69\%& 3008 &  12.35 &59\%& 11380 &  16.75 &44\% & 1.61\\
		512 & 379 &   6.15 &58\%& 3008 &   9.15 &39\%& 11380 &  14.70 &25\% & 2.35\\
	\end{tabular}
	\end{center}
\end{table}

In Table \ref{emilia} we report the scalability results for matrix \emi using levels
$0, 2$ and $5$ which correspond to using a polynomial preconditioner of degree $0$, $3$ and $31$, respectively.
It is shown that the parallel efficiency is greatly improved when a high degree of the preconditioner is used.
The relative efficiency from 16 to 1024 processors is increased from $25\%$ ($\text{lev}  = 0$) 
to $58\%$ ($\text{nlev}  = 5$) by a factor 2.35.

\noindent
The scalability results for matrix {\tt Cube5317k}, reported in Table \ref{cube} show a 1.6 CPU time
reduction from \texttt{nlev} $ = 0$ to  \texttt{nlev} $ = 5$.
\begin{table}[h!]
	\caption{Scalability analysis for the {\tt Cube5317k} matrix.}
	\label{cube}
	\begin{center}
	\begin{tabular}{r|rrr|rrr|c}
		& \multicolumn{3}{c}{\texttt{nlev} $ = 5$}& \multicolumn{3}{c}{\texttt{nlev} $ = 0$} \\
		\hline
		& & & & & & &\\[-.3em]
		$\p$ & iter & $T_p$ & 		$E_p^{(64)}$ 
		& iter & $T_p$ & 		$E_p^{(64)}$ & 
		$\dfrac{T_p(\texttt{nlev} =0)}{T_p(\texttt{nlev} =5)}$ \\[.5em]
		\hline
		64 & 298 & 154.79 &-- &9038 & 164.7 &-- & 1.06\\
		128 & 298 &  85.33 &91\%& 9038 &  91.30 &90\% & 1.07\\
		256 & 298 &  46.60 &83\%& 9038 &  53.63 &77\% & 1.15\\
		512 & 298 &  28.04 &69\%& 9038 &  35.12 &59\% & 1.25\\
		1024 & 298 & 21.23  &46\% &9038 &  33.94 &30\% & 1.60\\
	\end{tabular}
	\end{center}
\end{table}

\noindent
The different parallel performance is related to the nonzero patterns of the two matrices. In matrix 
{\tt Cube5317k} the nonzeros are more spread far from the diagonal (as a result of a local mesh refinement). 
This implies that a given processor
must receive/send data with a large number of other processors when performing the matrix-vector
product. This behavior is clearly shown in Figure \ref{npact}.
For the {\tt Cube5317k} matrix the predominant parallel cost is represented by the matrix-vector product
which is the bottleneck of the parallel computation for a high number of processors.
{\blue 
Clearly, this unvaforable sparsity pattern can be improved by preprocessing the linear system
with a suitable graph partitioning and fill-reducing matrix ordering. However we consider
this test case, as it is, a worst case scenario for our preconditioner, which, however,  
is shown to obtain satisfactory speed-ups}.

\begin{figure}[h!]
	\vspace{-5.3cm}
	\begin{center}
	\includegraphics[width=.9\textwidth]{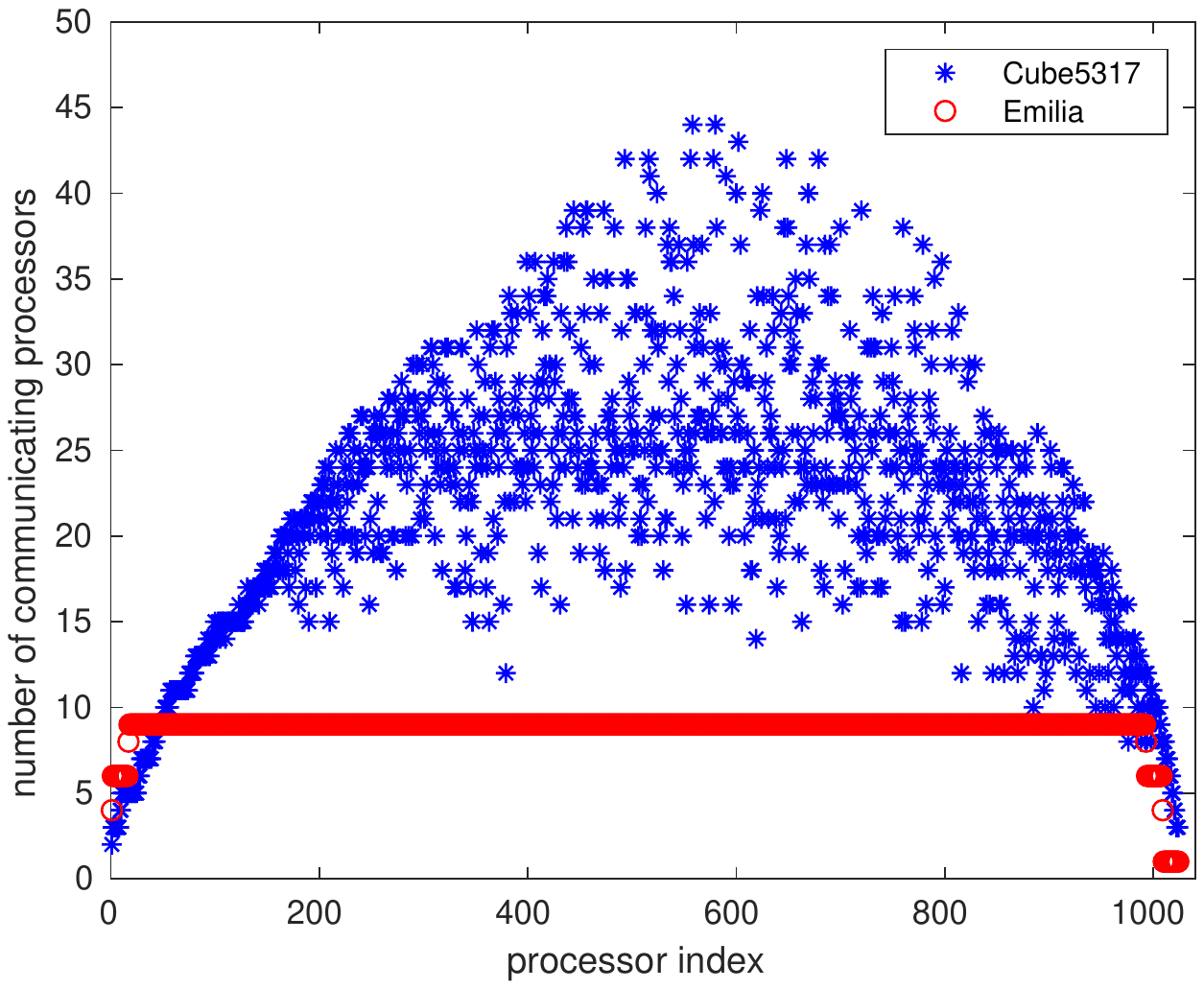}
	\vspace{-5.3cm}
	\end{center}
	\caption{Number of communicating processors with a given processor in performing
	the matrix-vector product.
	Matrices {\tt Cube5317k} and \emi with $\p$ = 1024.}
	\label{npact}
\end{figure}

\subsection{Results on huge matrices}
We now report the results in solving huge linear systems arising from Finite Difference (FD) 3D discretization of the 
Poisson equation in the unitary cube. These last runs have been conducted on the new Marconi 100 supercomputer available
at Cineca. 
MARCONI 100 is the new accelerated cluster based on 
980 IBM Nodes, each equipped with 
2x16 cores IBM POWER9 AC922 at 3.1 GHz  processors.

We consider three very large matrices: \texttt{lap3d(nx)}, where $\texttt{nx} = 512, 1024, 2048$ is the number of subdivisions
in each spatial dimension. The size, nonzeros and condition number of these matrices are reported in Table \ref{sizeFD}.
		\begin{table}[h!]
			\caption{Size $n$, number of nonzeros \texttt{nnz} and condition number $\kappa $ for the 3 FD-3D matrices.}
\label{sizeFD}
\begin{center}
\begin{tabular}{r|clc}
	\texttt{nx} & $n$ &  \texttt{nnz} & $\kappa(A)$ \\
\hline  
	512  & $1.3 \times 10^8$ &  $9.4 \times 10^9$   & $1.06 \times 10^5$ \\
	1024 & $1.1 \times 10^9$ &  $7.5 \times 10^9$   & $4.24 \times 10^5$ \\
	2148 & $8.6 \times 10^9$ &  $6.0 \times 10^{10}$& $1.70 \times 10^6$ \\
\end{tabular}
\end{center}
		\end{table}

\begin{table}[h!]
\begin{center}
	\caption{CPU times and iterations for the  \texttt{FD-3D(\texttt{nx})} problems for various
	degree of the polynomial and varying number of processors.}
	\label{lap3d}
\begin{tabular}{l|r|rr|rr|rr|l}
	\texttt{nx} & 	$\p$ &  \multicolumn{2}{|c}{\texttt{nlev} $ = 5$}& \multicolumn{2}{|c}{\texttt{nlev} $ = 2$} & \multicolumn{2}{|c}{\texttt{nlev} $ = 0$}&  
		\multirow{2}*{$\dfrac{T_p(\texttt{nlev} =0)}{T_p(\texttt{nlev} =5)}$} \\
		 & & iter & $T_p$  & iter & $T_p$  & iter & $T_p$  & \\
	\hline
	512 &	64 & 45 &   67.0  &   325 &  67.4   &  1300 &  95.3  & 1.4 \\
	&128 & 45 &   36.2 &    325 &  38.1   &  1300 & 50.2  & 1.4 \\
	&256 & 45 &   21.8 &    325&  21.8 &  1300 &  27.7  & 1.3 \\
	&512  & 45 &  13.8  &    325 & 13.3   &  1300 &  16.8  &  1.3 \\
	\hline
	1024 &64 & 88 &  858.4 &    637 &  945.2 &  2553 & 1481.7 & 1.7 \\
	&256 & 88 &  254.3 &    637 & 284.3 &  2553 &  400.6 & 1.6 \\
	&1024 & 88 &  97.2 &    637 & 101.6  &  2553 &  131.5 & 1.4 \\
	\hline
	2048 &512  & 165 & 1925.7 &-- &-- &    5033 & 3169.8 & 1.6\\
	&2048 & 165 &  710.5 &-- &--  &   5033 & 1001.5 & 1.4 \\
\end{tabular}
\end{center}
\end{table}
	The results, reported in Table \ref{lap3d},  show that we are able to solve very huge size problems with a good (relative) strong scalability.
	Moreover
the polynomial preconditioner (either with \texttt{nlev} $=2$ or \texttt{nlev} = $5$) takes from 1.3 to 1.7 
	less CPU time  than the diagonal preconditioner.

On the huge problem \texttt{lap3d(2048)} the relative efficiency from 512 to 2048 processors is around
	$70\%$.  This problem, with eight billion unknowns and 56 billion nonzeros has been
	solved with 165 iterations, three times as many scalar products, and 710.5 seconds
	with 2048 processors.

\textbf {Weak scalability analysis}.  We finally perform a sort of weak scalability analysis, weighted by taking into
	account that the condition number, and hence the number of PCG iterations, grows with $\texttt{nx}$.
	In detail, doubling the \texttt{nx} parameter
		the size of the corresponding matrix increases by a factor 8; moreover  its condition number increases
		by a factor 4 and therefore the PCG iteration number is expected to roughly double. 
	Summarizing,	 from a matrix to the subsequent
		one in the sequence, we may expect an increase of a factor 16 in the CPU time (with constant number of processors).
		Defining as
		$T_{nx,\p} $ the CPU time needed to solve a FD-3D matrix with \texttt{nx} and $\p$ processors 
a perfect weak scalability would predict a dependence of the CPU time on \texttt{nx} and $\p$ as 
\[ T_{\texttt{nx},\p} = O \left(\frac{\texttt{nx}^4}{p}\right) \] from which, assuming now $ \p = \texttt{nx}$:
	\[ T_{2\p, 2\p} = 8 T_{\p,\p} = 64 T_{\frac{\p}{2}, \frac{\p}{2}}.\]
From Table \ref{lap3d} we have indeed that, for \texttt{nlev} = $0$,
$\dfrac{T_{2048,2048}}{T_{512,512}} = 59.6$  whereas for \texttt{nlev} = $5$ 
	$\dfrac{T_{2048,2048}}{T_{512,512}} = 51.5$, which are both smaller (and hence better) than
	the theoretically optimal value of 64. 
	{\blue 
	\subsection{Comparisons with other parallel preconditioners}
	The proposed preconditioner has many pleasant features such as: No additional memory requirements, No need
	to explicitly store the matrix, It takes the number of scalar products to a very low value. 
	To show that it is also convenient in terms of overall efficiency we carried out a comparison with a state-of-the-art
	parallel preconditioned solver for SPD linear system. It is the solver \texttt{chronos}, available
	at the webpage \texttt{https://www.m3eweb.it/chronos/}, which makes use of an  enhanced AMG solver,
	partially based on a FSAI smoother with dynamical nonzero pattern  selection \cite{FMMSJ19, MFJ19}

	In Table \ref{AMG} we reported the results in solving  the FD matrix with $nx = 512$ for the PCG
	method accelerated with either the AMG or the FSAI preconditioners, after some trials to select the optimal parameters. Since the setup time to evaluate the preconditioner is rather high for this approach we reported this
	in the table as $T_{\text{setup}}$ while the CPU time for the PCG solution
	is $T_{\text{solver}}$. $T_p = T_{\text{setup}} + T_{\text{solver}}$ is, as before, the overall CPU time.

	\begin{table}[h!]
		\caption{Results for the solution of the FD-3D problem with $nx = 512$ using the \texttt{chronos} package.}
		\label{AMG}
		\begin{center}
	\begin{tabular}{r|rrrr|rrrr}
		\hline
		& \multicolumn{4}{c|}{AMG preconditioner } &
		  \multicolumn{4}{c}{FSAI preconditioner} \\
		& \multicolumn{4}{c|}{with FSAI as smoother} &&&&\\
		  \hline
		$\p$ & iter  &  $T_{\text{setup}}$  & $T_{\text{solver}}$ & $T_p$      
		     &       &  $T_{\text{setup}}$  & $T_{\text{solver}}$ & $T_p$      \\
		     \hline
		64&23&21.5&10.8&32.3 &786&5.9&86.4&92.4 \\
128&24&15.3&7.1&22.4 &        774&2.7&43.7&46.5 \\
256&26&12.4&4.4&16.8&       782&1.7&24.4&26.2 \\
512&28&14.3&4.6&18.8&       758&0.7&13.8&14.5 \\

	\end{tabular}
		\end{center}
	\end{table}
	Inspection of Tables \ref{AMG} and  \ref{lap3d} reveals that our polynomial preconditioner compares very well with
	this state-of-the-art solver both in terms of scalability and CPU times.
	Regarding the PCG solution times only, the AMG approach outperforms the NC preconditioner, however the gap progressively
	reduces as the number of processors increases.

	}

\section{Conclusions}
\label{conc}
We have proposed a (potentially high-degree) polynomial preconditioner for the Conjugate Gradient method with the aim of greatly reducing the number of scalar products which may represent a bottleneck especially in parallel computations.  
By avoiding clustering of extremal eigenvalues, the preconditioner
	obtains its best performances when the degree $m$ is relatively high (good results
	have been obtained with  $m = 31$ or $m = 63$).
	Numerical results onto very large matrices reveal that these polynomial preconditioners
may be successfully employed to accelerate the Conjugate Gradient method by drastically
	reducing the number of scalar products (and hence the collective communications in parallel
	environments). 
 In sequential computations the polynomial preconditioner with degree $31$ reduces the CPU time 
 of about $30\%$ with respect to the diagonal preconditioner.
 Parallel runs with up to 2048 processors on the Marconi supercomputer show that the important reduction in the number of scalar products (which 
 reduces roughly to 97\% smaller with respect to the diagonal preconditioner, with $m = 31$)
 yielding a improvement over the diagonal preconditioner from 30\% to 60\% of the total CPU time.

	Further study is undergoing to give theoretical setting how to compute
	the optimal scaling parameter. Moreover, a low-rank acceleration of the polynomial preconditioner
	will be investigated, following e.g. \cite{LB_Algorithms_2020} by exploiting the
	well separation of the smallest eigenvalues provided by our polynomial preconditioner.
	{\blue 
	We finally observe that the described approach can be applied whenever
	a first level parallel preconditioner is at hand in factored form, say $P_0 = W W^T$, to obtain a second level
	preconditioner applying the Newton-Chebyshev polynomials to the matrix $ W^T A W$.}

\subsection*{Acknowledgements}
This work was partially supported by the Project granted by the CARIPARO
foundation {\em Matrix-Free Preconditioners for Large-Scale Convex
Constrained Optimization Problems (PRECOOP)} and
by the INdAM Research group GNCS, 2020 Project: Optimization and advanced linear algebra for problems arising from PDEs.

\end{document}